\theoremstyle{plain}
\newtheorem{theorem}[subsection]{Theorem}
\newtheorem{corollary}[subsection]{Corollary}
\theoremstyle{remark}
\newtheorem{definition}[subsection]{Definition}
\numberwithin{equation}{section}
\newcommand{\CC}{\mathbb{C}}
\newcommand{\C}{\mathcal{C}}
\newcommand{\M}{\mathcal{M}}
\newcommand{\N}{\mathcal{N}}
\newcommand{\git}{\mathord{/\mkern-6mu/}}
\newcommand{\CCC}{\mathbb{C}[\mathcal{C}_{42}]}
\DeclareMathOperator{\Tr}{Tr}
\DeclareMathOperator{\GL}{GL}
\DeclareMathOperator{\Com}{Com}
\begin{document}
	
	\title[Poisson structure and invariants of matrices]{Noncommutative Poisson structure and invariants of matrices}
	
	\author{F.~Eshmatov}
	\author{X.~García-Martínez}
	\author{R.~Turdibaev}
	
	\email{f.eshmatov@newuu.uz}
	\email{xabier.garcia.martinez@uvigo.gal}
	\email{rustam.turdibaev@usc.es}

	\address[Farkhod Eshmatov]{New Uzbekistan University, 100007 Tashkent, Uzbekistan --- Central Asian University, National Park Street 264, 111221, Tashkent, Uzbekistan}
	\address[Xabier Garc\'ia-Mart\'inez]{CITMAga \& Universidade de Vigo, Departamento de Matem\'aticas, Esc.\ Sup.\ de Enx.\ Inform\'atica, Campus de Ourense, E--32004 Ourense, Spain}
	\address[Rustam Turdibaev]{CITMAga \& Universidade de Santiago de Compostela, Departamento de Matemáticas, R\'ua Lope Gómez de Marzoa, s/n, 15782 Santiago de Compostela, Spain}

	\thanks{This work was supported by Agencia Estatal de Investigaci\'on de Espa\~{n}a (Spain, European FEDER support included), grants PID2020-115155GB-I00 and PID2021-127075NA-I00, and by Xunta de Galicia through the Competitive Reference Groups (GRC), ED431C 2023/31.}

	\begin{abstract}
		We introduce a novel approach that employs techniques from noncommutative Poisson geometry to comprehend the algebra of invariants of two~$n\times n$ matrices. We entirely solve the open problem of computing the algebra of invariants of two~$4 \times 4$ matrices. As an application, we derive the complete description of the invariant commuting variety of $4 \times 4$ matrices and the fourth Calogero-Moser space.
	\end{abstract}
	
	\subjclass[2020]{16R30, 16S38, 14A22, 13A50}
	\keywords{Noncommutative Poisson structure, invariant theory, commuting variety, Calogero-Moser space}
	
	\maketitle
	

	\section{Introduction}
	
	In the realm of algebraic geometry, there exists a categorical equivalence between affine schemes and commutative algebras. Many geometric structures on affine schemes can be algebraically described, and conversely. However, this correspondence is not applicable to associative algebras.
	
	Kontsevich and Rosenberg proposed a heuristic principle to explore noncommutative geometry on associative algebras, which may not necessarily be commutative~\cite{KR}. This principle can be broadly summarized: for an associative algebra $A$ over $\mathbb{C}$, any noncommutative geometric structure—such as noncommutative Poisson, noncommutative symplectic, etc.—should naturally induce its classical counterpart on the affine variety $\mathrm{Rep}_n(A)$ classifying all representations of $A$ on $\mathbb{C}^n$, 
	for every $n \in \mathbb{N}$.
	
	Crawley-Boevey introduced an \textit{$H_0$-Poisson structure} on any associative algebra~\cite{CB}. Essentially, it is a bilinear bracket on $A$ which, in turn, induces a Lie bracket on the commutator quotient space $A_{\natural}\coloneq A/[A,A]$. This structure aligns with the~KR principle.
	More precisely, let $\mathbb{C}[\mathrm{Rep}_n(A)]$ be the coordinate ring of~$\mathrm{Rep}_n(A)$. 
	The natural trace map 
	\begin{equation}
		\label{Trn}
		\mathrm{Tr}_n(A)\colon A_{\natural} \to \mathbb{C}[\mathrm{Rep}_n(A)] \, , \quad \bar{a} \to [\rho \mapsto \mathrm{Tr}\, \rho(a)] 
	\end{equation}
	sends the elements of $A_{\natural}$ to functions on $\mathrm{Rep}_n(A)$. The image of $\mathrm{Tr}_n(A)$ lies in
	the subalgebra $\mathbb{C}[\mathrm{Rep}_n(A)]^{\mathrm{GL_n}}$ which represents the coordinate ring of the character variety $\mathrm{Rep}_n(A)\git \mathrm{GL_n}$. Then the main result of~\cite{CB} states that if $A$ possesses a $H_0$-Poisson structure then there is a unique Poisson bracket on $\mathbb{C}[\mathrm{Rep}_n(A)]^{\mathrm{GL_n}}$ so that the trace map~\eqref{Trn} is a Lie algebra homomorphism. Equivalently, the algebra homomorphism induced from~\eqref{Trn}
	\begin{equation}
		\label{symtr}
		\mathrm{Sym \, {Tr}}_n(A)\colon \mathrm{Sym} (A_{\natural}) \to 
		\mathbb{C}[\mathrm{Rep}_n(A)]^{\mathrm{GL_n}}
	\end{equation}
	is a Poisson homomorphism.

	Let $\mathcal{M}_{n}$ be the vector space of $n\times n$ matrices over the field $\mathbb{C}$. Consider a positive integer $d$ and the action of the general linear group $\mathrm{GL}_n(\mathbb{C})$ on the direct product $\mathcal{M}_n^d$ of $d$ copies of $\mathcal{M}_n$ by simultaneous conjugation. This action sends a $d$-tuple $(X_1,\dots ,X_d)$ to another $d$-tuple $(gX_1g^{-1},\dots,gX_dg^{-1})$ for any $g \in \mathrm{GL}_n(\mathbb{C})$. Consequently, this action induces an action of $\mathrm{GL}_n$ on the algebra $\mathbb{C}[\mathcal{M}_n^d]$, representing polynomial functions on $\mathcal{M}_n^d$. One of the fundamental questions in invariant theory revolves around describing the generators and relations of the algebra~$C_{nd} \coloneq \mathbb{C} [\mathcal{M}_n^d]^{\mathrm{GL_n}}$. A well-known result by Procesi~\cite{Pr} and Razmyslov~\cite{Ra} states that $C_{nd}$ is generated by trace functions and the algebraic relations between them are derived from the Cayley-Hamilton theorem. Let us provide the following, more algebraic, interpretation of this result. Consider the free algebra~$R \coloneq \mathbb{C}\langle x_1,...,x_d \rangle$ on $d$ generators. Then each point $X \coloneq (X_1,...,X_d) \in \mathcal{M}_n^d$ can be uniquely determined by the representation
	$$ \rho_X \colon R \to \mathcal{M}_n\, , \quad x_i \mapsto X_i \, $$
	and two representations $\rho_X$ and $\rho_Y$ are isomorphic if and only if $X$ and $Y$ lie in the same $\mathrm{GL}_n$-orbit. Consequently, we have the equivalence 
	$$C_{nd} \cong \mathbb{C}[\mathrm{Rep}_n(R)]^{\mathrm{GL_n}}\, , $$ 
	and the Procesi-Razmyslov result can be interpreted as the map in~\eqref{symtr} being a surjective homomorphism. Thus, if $R$ has an $H_0$-Poisson structure, the map in~\eqref{symtr} becomes a surjective Poisson homomorphism. Consequently, describing the algebra~$C_{nd}$ is equivalent to providing a description of the kernel of $\mathrm{Sym \, {Tr}}_n(R)$,  which forms a Poisson ideal of $\mathrm{Sym} (R_{\natural})$.
	
	Let us discuss the $H_0$-Poisson structure proposed by Kontsevich on the free algebra with an even number of generators and the corresponding Lie algebra homomorphism. To this end, we utilize the quiver realization of a free algebra.
	Let~$Q=(I,H)$ be a finite quiver, where $I$ is the set of vertices and $H$ is the set of edges.
	For $\alpha \in \mathbb{N}^I$, we define the space of representations of $Q$ of dimension vector~$\alpha$ to be the vector space of matrices 
	$$ \mathrm{Rep}(Q, \alpha)=\bigoplus_{h\in H} \mathrm{Mat}(\alpha_{h_0}\times \alpha_{h_1},\mathbb{C}) \, ,$$
	where $h_0$ and $h_1$ are the head and tail of an edge $h$. The group $G(\alpha) = \prod \GL_{\alpha_i}(\mathbb{C})$ acts on $\mathrm{Rep}(Q, \alpha)$ by conjugation, and the quotient space $\mathrm{Rep}(Q, \alpha)/G(\alpha)$ represents isomorphism classes of such representations. Recall that the \textit{double} quiver~$\overline{Q}$ is defined by adding a reverse edge $a^{\ast}$ for each $a \in H$. Then there is a natural identification of $\mathrm{Rep}(\overline{Q}, \alpha)$ with the cotangent bundle $T^*\mathrm{Rep}(Q, \alpha)$. 
	The latter has a canonical symplectic structure, which via the symplectic reduction construction~\cite{MW}, induces a symplectic structure on $T^*\mathrm{Rep}(Q, \alpha)\git G(\alpha)$. This, in turn, induces a Poisson bracket on the algebra $\mathbb{C}[\mathrm{Rep}(\overline{Q}, \alpha)]^{G(\alpha)}$.
	
	In~\cite{Ko}, Kontsevich defined an infinite-dimensional Lie algebra structure on the vector space $\mathbb{C} \overline{Q}_{\natural}$, where  $\mathbb{C} \overline{Q}$ is the path algebra of $ \overline{Q}$. He showed that the map
	\begin{equation}
		\label{symtrQ}
		\mathrm{Tr}_{\alpha}(Q)\colon \mathbb{C} \overline{Q}_{\natural} \to 
		\mathbb{C}[\mathrm{Rep}(\overline{Q}, \alpha)]^{G(\alpha)}
	\end{equation}
	is a Lie algebra homomorphism for all $\alpha \in \mathbb{N}^{I}$. Let $Q$ be the quiver with one vertex and $k$ edges-loops. Then, the path algebra $\mathbb{C} \overline{Q}$ is isomorphic to the free algebra~$\mathbb{C}\langle x_1,\dots,x_{k},y_1,\dots,y_k \rangle$. The vector space $V=\mathrm{span}(x_1,\dots,x_{k},y_1,\dots,y_k)$ has a canonical symplectic form~$\omega$: 
	\[
	\omega(x_i,x_j)=\omega(y_i,y_j)=0 \quad 
	\textrm{and} \quad \omega(x_i,y_j)=\delta_{ij}.
	\]
	Then, the Kontsevich Lie bracket on $\mathbb{C} \overline{Q}_{\natural}$ is defined as follows:
	\begin{equation}	\label{Kont}
		\{u_1\cdots u_p, v_1\cdots v_q\}=\sum\limits_{\substack {
				1\leq i \leq p \\ 	  1\leq j \leq q	}}\omega(u_i,v_j)u_{i+1}\cdots u_pu_1\cdots u_{i-1}v_{j+1}\cdots v_q v_1\cdots v_{j-1},
	\end{equation}
	where the elements $u_1,\dots,u_p,v_1,\dots, v_q \in \{ x_1,...,x_k,y_1,...,y_k\}$. As mentioned above,  the map~\eqref{symtrQ} allows to compute the Poisson bracket on $C_{nd}$ for $d=2k$. For instance, the explicit Poisson bracket on $C_{32}$ was described this way by Normatov and Turdibaev~\cite{NT-JAA}. 
	
	The problem of identifying a minimal generating set of~$C_{nd}$ and finding the exact relations among these generators still remains  unsolved. Several noteworthy results in this area have been attained thus far and explicit descriptions were found for only $C_{2m}$ for all positive integers $m$, $C_{32}$ and $C_{33}$.  The main method utilized is based on the work of Abeasis and Pittaluga~\cite{AP}, using the representation theory of symmetric and general linear groups. They employed this approach in a manner reminiscent of its application in the theory of PI-algebras. Their method finds the minimal set of generators as a result of a decomposition of a certain module into irreducible components. 
	
	The next goal was to give the description of $C_{42}$. Drensky and Sadikova~\cite{DS} discovered the existence of a minimal generating set whose span forms a semisimple graded $\GL_2$-module, and they studied its structure. As for the defining relations, it seems that the question remained largely open. To address this, Drensky and La~Scala~\cite{DLaS} have devised algorithms based on the representation theory of $\GL_2$ and conducted calculations using standard Maple functions to determine all defining relations occurring in degrees 12, 13, and 14. 
	
	The present paper provides a comprehensive solution to the problem of identifying every defining relation with respect to the minimal system of generators of~$C_{42}$ in Drensky and Sadikova~\cite{DS}. Unlike previous approaches which were confined to generating relations within the same degree, our method coming from the noncommutative geometry offers the advantage of a progression from lower to higher degree relations through the Poisson algebra structure. Remarkably, the highest degree at which these defining relations appear is $20$. Nevertheless, our approach expedites the procedure, enabling the combinatorial computation to finalize at degree 16 with a partial analysis of bidegree (8, 8) being sufficient.  We demonstrate that by utilizing the Poisson algebra structure, a mere 8 relations suffice to build the associative ideal of defining relations, which is generated by 105 polynomials. The insights into how the Poisson structure is effectively harnessed to enhance our computation are given in Subsection~\ref{ss:refined}.

	With all the defining relations of $C_{42}$ at our disposal, we are confronted with the task of understanding them. One perspective in invariant theory suggests the following useful way of representing an invariant by its Hironaka decomposition. It is well-known, that $C_{nd}$ is Cohen-Macaulay(see~\cite{HR} and~\cite{VDB}). In other words, there exists a set of polynomials, called a homogeneous system of parameters or primary invariants $p_1,\dots, p_k$, such that 
	\begin{equation}\label{Hironaka}
		C_{nd} = \bigoplus_{i=1}^s q_i \mathbb{C}[p_1,\dots,p_k]
	\end{equation}
	as a $\mathbb{C}[p_1,\dots,p_k]$-module. The set of polynomials $\{q_1,\dots, q_s\}$ is called the secondary invariants and when combined with the homogeneous system of parameters, they form a set of fundamental invariants. The number of primary invariants is known to be $k=(d-1)n^2+1$ (see for instance an argument by Teranishi~\cite{T}). The decomposition itself has been largely unknown, even for $C_{42}$, where only the primary invariants were presented by Teranishi \cite{T}. Drensky and La~Scala~\cite{DLaS} provided some degree bounds for the secondary invariants of $C_{42}$, but due to an incomplete set of defining relations, the decomposition remained elusive. Now, with the acquisition of all defining relations of $C_{42}$, we are able to find all the secondary invariants presented in Appendix \ref{Secondary_Invariants} and thus, the quest of searching an answer of what is an invariant of pairs of $4\times 4$ matrices is completed in Subsection~\ref{Final}.

	As an application of our description, we reproved the results of~\cite{EGNT} on the defining relations of the invariant commuting variety of two $4 \times 4$ matrices and the fourth Calogero-Moser space. In fact, this can possibly be further extended to obtain more descriptions of other subvarieties embedded into the GIT quotient of pairs of matrices.
	
	Concerning larger matrix sizes, \DJ okovi\'{c}~\cite{Do} presented a minimal generating set consisting of 173 polynomials for $C_{52}$ and computed its Hilbert series. However, it is important to note that the form of these polynomials differs from those in~\cite{ADS} and~\cite{DS}, indicating that they might not be well-suited for expressing simple defining relations. Regarding $C_{62}$, the current knowledge is limited to the Hilbert series only (see~\cite{Do}). The methods developed in our work could potentially find application in resolving these open problems.
	
	\subsection*{Acknowledgments} We would like to thank Ivan Shestakov, Efim Zelmanov, Vyacheslav Futorny, Vladimir Dotsenko, Yura Berest and Xiaojun Chen for their valuable discussions.

	\section{Preliminaries}
	
	We denote by $\mathcal{M}_{n}$ the $\mathbb{C}$-algebra of $n\times n$ matrices over the field of complex numbers $\mathbb{C}$. Let $d$ be a positive integer and consider the action of the general linear group $\mathrm{GL}_n(\mathbb{C})$ on the direct product $\mathcal{M}_n^d$ of $d$ copies of $\mathcal{M}_n$ by simultaneous conjugation sending a $d$-tuple $(X_1,\dots ,X_d)$ to a $d$-tuple $(gX_1g^{-1},\dots,gX_dg^{-1})$ for any $g \in \mathrm{GL}_n(\mathbb{C})$. This action induces an action of $\mathrm{GL}_n$ on the algebra
	$\mathbb{C}[\mathcal{M}_n^d]$ of polynomial functions on $\mathcal{M}_n^d$, i.e.~for $\varphi \in \mathbb{C}[\mathcal{M}_n^d]$ and $g\in \mathrm{GL}_n$:
	\[
	(g\ast \varphi)(X_1,\dots,X_d)= \varphi(g^{-1}X_1g,\dots,g^{-1}X_dg).
	\]
	Note that one can identify $\M_n^d$ with the affine space $\mathbb{C}^{dn^2}$. In general, the description of the GIT-quotient $\mathcal{M}_n^d/\!\!/\mathrm{GL}_n$ is essentially an open problem of  invariant theory.

	Given that an algebraic variety $V$ is entirely characterized by its algebra of polynomial functions $\mathbb{C}[V]$, our objective is to describe the algebra $\mathbb{C}[\mathcal{M}_n^d/\!\!/\mathrm{GL}_n]$. However, by Mumford~\cite{Mu}, we can instead study the isomorphic algebra of $\mathrm{GL}_n$-invariant polynomials	
	\[ \mathbb{C}[\mathcal{M}_n^d]^{\mathrm{GL}_n}= \{ \varphi \in \mathbb{C}[\mathcal{M}_n^d] \mid g\ast \varphi = \varphi, \, \forall g\in \mathrm{GL}_n \}.
	\]
	Traditionally, the algebra of invariants $\mathbb{C}[\mathcal{M}_n^d]^{\mathrm{GL}_n}$ is denoted by $C_{nd}$. Conjecturing that invariants of matrices of arbitrary sizes are generated by finitely many trace functions, Artin~\cite{A} showed that this is indeed the case for invariants of small-sized matrices. Later, the conjecture was independently proven by Razmyslov~\cite{Ra}, Procesi~\cite{Pr}, Sibirski\u{\i}~\cite{Sib} and Helling~\cite{H}. 
	
	\begin{theorem}[\cite{Pr}] The algebra $C_{nd}$ is generated over $\mathbb{C}$ by all traces of products of generic matrices $\Tr(A_{i_1}A_{i_2}\cdots A_{i_j})$, with $j\leq 2^n-1$.
	\end{theorem}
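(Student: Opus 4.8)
The plan is to break the statement into two independent parts: first, the \emph{First Fundamental Theorem}, that $C_{nd}$ is generated as a $\CC$-algebra by the trace functions $\Tr(A_{i_1}\cdots A_{i_j})$ of arbitrary monomials in the generic matrices; and second, the \emph{degree bound}, that the trace functions coming from monomials of length $\geq 2^n$ are already polynomials in those coming from monomials of length $\leq 2^n-1$. The first part is classical invariant theory via Schur--Weyl duality; the second is a consequence of the Cayley--Hamilton theorem together with a combinatorial reduction, and it is there that the real work lies.

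For the First Fundamental Theorem I would first reduce to multilinear invariants by polarization: a homogeneous invariant of multidegree $(m_1,\dots,m_d)$ is recovered by restitution from a multilinear $\GL_n$-invariant function of $m_1+\cdots+m_d$ matrix arguments, so it suffices to describe multilinear invariant forms $\varphi\colon \M_n^{\otimes m}\to\CC$. Writing $V=\CC^{n}$ and using the identification $\M_n\cong V\otimes V^{\ast}$ together with the nondegenerate trace pairing, such a $\varphi$ corresponds to a $\GL(V)$-equivariant endomorphism of $V^{\otimes m}$. By Schur--Weyl duality $\mathrm{End}_{\GL(V)}(V^{\otimes m})$ is spanned by the operators permuting the tensor factors, i.e.\ by the image of $\CC[S_m]$; unwinding the pairing, the invariant attached to a permutation $\sigma$ evaluates on $(X_1,\dots,X_m)$ to $\prod_{c}\Tr\!\big(\prod_{j\in c}X_j\big)$, where $c$ ranges over the cycles of $\sigma$ and the inner product follows the cyclic order of $c$. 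Hence every multilinear invariant is a $\CC$-linear combination of products of traces of monomials, and restitution shows that $C_{nd}$ is generated as an algebra by the single traces $\Tr(A_{i_1}\cdots A_{i_j})$, of all degrees.

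For the degree bound I would pass to the \emph{trace ring} $T\subseteq\M_n(\CC[\M_n^d])$, the subalgebra generated by the generic matrices $\xi_1,\dots,\xi_d$ together with the scalar matrices $\Tr(m)I$; by the previous paragraph its centre is $C_{nd}$, and $C_{nd}$ is generated by the $\Tr(m)$ for monomials $m$ in the $\xi_i$. The Cayley--Hamilton theorem applied to $\xi$, combined with Newton's identities expressing the characteristic coefficients through $\Tr(\xi),\dots,\Tr(\xi^{n})$, writes $\xi^{n}$ as a $C_{nd}$-combination of $I,\xi,\dots,\xi^{n-1}$; fully polarizing this identity yields the fundamental multilinear trace identity of degree $n+1$, and multiplying it by a further monomial and taking traces expresses a symmetrized sum of trace monomials of a given length $\ell\geq n+1$ through trace monomials of smaller length and products of such. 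The task is then to iterate these identities so as to show that every \emph{single} monomial in the $\xi_i$ of degree $2^n-1$ lies in the $C_{nd}$-span of the monomials of smaller degree; since this reducibility propagates to all higher degrees, $T$ is spanned as a $C_{nd}$-module by monomials of degree $\leq 2^n-2$, and then, writing an arbitrary long monomial as such a combination and applying $\Tr$, an induction on degree shows that $C_{nd}$ is generated as an algebra by the traces of monomials of length $\leq 2^n-1$.

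I expect this last step to be the main obstacle. Obtaining \emph{some} finite bound is routine once the fundamental trace identity is available, but producing the precise constant $2^n-1$ — rather than a far larger Shirshov-type bound coming from general PI-theory — requires organizing the combinatorics carefully: one argues, roughly by an induction on $n$ in which a drop of one in the matrix size costs a factor of two in the admissible length, that a word of length $2^n$ necessarily exhibits the repetitive structure on which Cayley--Hamilton can be made to act. Keeping track of which monomials are genuine $C_{nd}$-combinations of which — as opposed to merely linearly dependent over the fraction field, where a logarithmic bound would already suffice — is the delicate point, and is the technical heart of the proofs of Procesi and Razmyslov.
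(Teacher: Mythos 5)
You should first note that the paper offers no proof of this statement at all: it is quoted verbatim from Procesi's paper, and the surrounding text even signals (via the remark about Razmyslov's bound $n^2$ and the Dubnov--Ivanov--Nagata--Higman theorem) where the degree bound really comes from. So the comparison is with the classical argument, not with anything in the text. Your first half is fine: polarization/restitution to reduce to multilinear invariants, the identification $\mathcal{M}_n\cong V\otimes V^{*}$, Schur--Weyl duality identifying $\mathrm{End}_{\GL(V)}(V^{\otimes m})$ with the image of $\CC[S_m]$, and the computation that the invariant attached to $\sigma$ is the product of traces over the cycles of $\sigma$ --- this is exactly the standard proof of the first fundamental theorem and it correctly yields generation by traces of monomials of all lengths.

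The genuine gap is in the second half, which is the only place the constant $2^n-1$ lives, and there your text stops at the level of intentions. Saying that one should ``iterate'' the fundamental trace identity, and that ``a word of length $2^n$ necessarily exhibits the repetitive structure on which Cayley--Hamilton can be made to act,'' is an assertion, not an argument; in fact the known proof does not find repeated subwords at all (that would be a Shirshov-type argument, which you rightly say gives much worse constants). Procesi's route is: work modulo the ideal of \emph{decomposable} invariants (products of two positive-degree trace invariants); the fully polarized Cayley--Hamilton identity shows that, modulo decomposables, the positive-degree part of the free algebra equipped with the trace form behaves like a nil algebra of index $n$, i.e.\ all symmetrized $n$-fold products vanish there --- and this is where characteristic zero enters, since $x^n=0$ is equivalent to its full polarization. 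One then invokes the purely combinatorial Dubnov--Ivanov--Nagata--Higman theorem with Higman's estimate, namely that nil of index $n$ forces nilpotency of index at most $2^n-1$, proved by induction on the nil index via the recursion $N(n)\le 2N(n-1)+1$ (not by induction on the matrix size, although numerically the ``factor of two per step'' you guessed is the shadow of this recursion). This immediately gives that $\Tr(w)$ is decomposable for every word $w$ of length at least $2^n$, hence the generation bound $2^n-1$. Your intermediate claim that the trace ring $T$ is spanned as a $C_{nd}$-module by monomials of degree $\le 2^n-2$ is a different (module-theoretic) statement that itself needs the same Nagata--Higman input, so it cannot serve as a stepping stone. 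As written, then, the half of the theorem that contains the actual bound is not proved; inserting the reduction to nil algebras and citing (or proving) Higman's $2^n-1$ bound is what is missing.
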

	
	Another appropriate upper bound on the length of the words inside the trace function was given by Razmyslov~\cite{Ra} to be $n^2$. In fact, it is known that the exact upper bound is the same number determined by Dubnov-Ivanov-Nagata-Higman theorem in the context of PI-algebras. The algebra $C_{nd}$ is also referred to as pure or commutative trace algebra. For further background on these algebras we refer to Drensky and Formanek~\cite{DF}. The following result, often called the second fundamental theorem, affirms that the defining relations are derived from the Cayley-Hamilton theorem.
	
	\begin{theorem}[\cite{Ra}]
		All defining relations of the pure trace algebra $C_{nd}$ over a field of characteristic zero are consequences of the Cayley-Hamilton theorem.
	\end{theorem}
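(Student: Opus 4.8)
The plan is to obtain the theorem from the first fundamental theorem above (Procesi's theorem) together with classical Schur--Weyl duality, in the spirit of Procesi's treatment; throughout we work over $\mathbb{C}$. By the first fundamental theorem, a defining relation of $C_{nd}$ is an element of the kernel $K_{nd}$ of the surjection from the abstract pure trace algebra $T_d$ --- the polynomial ring on symbols $t(w)$, one for each cyclic word $w$ in $d$ letters, with $t(w)\mapsto\Tr\big(w(A_1,\dots,A_d)\big)$ --- onto $C_{nd}$. Since $\mathrm{char}\,\mathbb{C}=0$, the ideal $K_{nd}$ is closed under substitutions and is generated by its multihomogeneous parts, and each multihomogeneous relation is recovered by restitution from its full polarization. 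Hence it suffices to show that every \emph{multilinear} pure trace relation, in any number of generic matrices $A_1,\dots,A_m$, is a consequence of the Cayley--Hamilton identity.

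The next step is to pass to the symmetric group. A multilinear trace monomial in $A_1,\dots,A_m$ is the product of traces along the cycles of a permutation $\sigma\in S_m$, and by Schur--Weyl duality for $\mathrm{GL}(\mathbb{C}^n)$ these monomials span all multilinear $\GL_n$-invariants of $A_1,\dots,A_m$: contracting the permutation operators of $S_m$ acting on $(\mathbb{C}^n)^{\otimes m}$ against $A_1\otimes\cdots\otimes A_m$ and taking the total trace gives a surjection $\Phi_m\colon\mathbb{C}[S_m]\to\{\text{multilinear }\GL_n\text{-invariants of }A_1,\dots,A_m\}$, $\sigma\mapsto$ (the corresponding product of traces). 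By the double centralizer theorem, inside the Wedderburn decomposition $\mathbb{C}[S_m]=\bigoplus_{\lambda\vdash m}B_\lambda$ into simple two-sided ideals the kernel $\ker\Phi_m$ is the sum of the $B_\lambda$ with $\lambda$ having more than $n$ rows --- exactly the Young symmetrizers annihilated by $(\mathbb{C}^n)^{\otimes m}$, since one cannot antisymmetrize $n+1$ vectors in an $n$-dimensional space. The crucial point is then that, for every $m\ge n+1$, this ideal is generated as a two-sided ideal of $\mathbb{C}[S_m]$ by the single element $e_{n+1}\coloneqq\sum_{\sigma\in S_{n+1}}\mathrm{sgn}(\sigma)\,\sigma$, the antisymmetrizer on the first $n+1$ letters embedded via $S_{n+1}\hookrightarrow S_m$: indeed $e_{n+1}$ is, up to a scalar, the Young symmetrizer of the one-column shape $(1^{n+1})$, and a partition of $m$ has more than $n$ rows precisely when its diagram contains that column.

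It then remains to recognize this generating relation. Writing out $\Phi_{n+1}(e_{n+1})=0$ produces the \emph{fundamental trace identity}
\[
\sum_{\sigma\in S_{n+1}}\mathrm{sgn}(\sigma)\,\Phi_{n+1}(\sigma)(A_1,\dots,A_{n+1})=0 ,
\]
and a standard computation, using Newton's identities to express the coefficients of the characteristic polynomial $\chi_A$ through power traces, identifies it with the complete polarization of $\Tr\big(\chi_A(A)\,B\big)=0$; conversely $\chi_A(A)=0$ is recovered from the fundamental trace identity by restitution and nondegeneracy of the trace form, so the two are equivalent. Finally one checks that, via $\Phi_m$, the operations generating consequences of an identity in the trace algebra --- scalar combinations, multiplication by trace monomials, and substitution of words of generic matrices for the variables --- correspond on the $\mathbb{C}[S_m]$ side to passing to the two-sided ideal generated by $e_{n+1}$ and to enlarging the index set $m$. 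Combining the three steps, an arbitrary multilinear relation lies in some $\ker\Phi_m$, hence in the two-sided ideal generated by $e_{n+1}$, hence is a consequence of the fundamental trace identity, that is of Cayley--Hamilton; restituting yields the statement for all relations.

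The main obstacle is the symmetric-group step: proving that the ideal of $\mathbb{C}[S_m]$ supported on the partitions with more than $n$ rows is generated, uniformly in $m$, by the single antisymmetrizer $e_{n+1}$, and --- inseparably from this --- carrying out the bookkeeping that matches the closure operations of the trace algebra with two-sided-ideal generation in the group algebras together with the enlargement of the number of variables. The remaining ingredients, Schur--Weyl duality and the polarization/restitution formalism in characteristic zero, are standard.
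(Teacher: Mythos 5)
The paper states this result as a classical theorem cited to Razmyslov (and Procesi) and gives no proof of its own, so there is nothing internal to compare against; your outline reproduces the standard Procesi--Razmyslov argument --- polarization/restitution in characteristic zero, Schur--Weyl duality identifying multilinear trace invariants with $\mathbb{C}[S_m]$ modulo the ideal supported on partitions with more than $n$ rows, generation of that ideal by the antisymmetrizer $e_{n+1}$ via the branching rule, and identification of the resulting fundamental trace identity with the full polarization of Cayley--Hamilton --- and it is correct in substance, with the two steps you flag (the ideal-generation lemma and the matching of ``consequences'' with two-sided ideal generation under substitution) being exactly the points where the classical references do the real work.
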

	
	
	\subsection{Generators and defining relations} 
	Dubnov showed~\cite{D} that the only free algebra among pure trace algebras happens to be the following:
	\[
	C_{22}= \mathbb{C}[\Tr(X), \Tr(Y), \Tr(X^2), \Tr(XY), \Tr(Y^2)]. 
	\]
	Moreover, it is known that $C_{23}$ is a polynomial algebra with a single relation from the work of Sibirski\u{\i}~\cite{Sib} and Formanek~\cite{F1}. For $C_{2d}$ with $d>2$, some polynomial relations appear among the generators. With a choice of specific traceless generic matrices, Drensky~\cite{D2m} is able to give a better presentation of the defining relations. Another reason to change to traceless matrices it due to Procesi~\cite[Section 5]{Pr}, since there is an isomorphism of $\mathbb{Z}^d$-graded algebras:
	\[
	C_{nd} \cong \mathbb{C}[u_1,u_2,\dots,u_d] \otimes C_{n,d}(0),
	\]
	where $C_{n,d}(0)$ is the algebra of $\GL_n$-invariant polynomial functions on the direct product of $d$-tuples of the subspace of traceless $n\times n$ matrices. In fact, those $u_i$'s are the traces of generic matrices. 
	
	For $C_{32}$, Teranishi~\cite{T} proves that if the traces of the following ten words 
	\begin{equation}\label{hsop_C32}
		X, \, Y, \, X^2,\, XY, \, Y^2,\, X^3,\, X^2Y, \, XY^2,\, Y^3,\, X^2Y^2 
	\end{equation}
	are zero, then the value of the trace of any word in generic matrices is also zero. Applying Hilbert theorem, he argues that $C_{32}$ is integral over the polynomial subalgebra $A$ in those ten variables and he establishes its Hironaka decomposition~$C_{32}=A\oplus \Tr( XYX^2Y^2)A$. Clearly, a defining relation containing the square of~$\Tr(XYX^2Y^2)$ is eminent and it appears to be quite complicated in the work of Nakamoto~\cite{Na}. Once again, shifting to the traceless versions 
	\[ 
	A\coloneq X-\frac13\Tr(X)I_3, \qquad B\coloneq Y-\frac13\Tr(Y)I_3
	\] of the generic matrices $X$ and $Y$, along with replacing the last two generators with the traces of $[A,B]^2$ and $[A,B]^3$, where throughout the article $[A,B]=AB-BA$, appears to significantly simplify~\cite{ADS} the defining relation. 
	
	In~\cite[Proposition 5.2]{T} a set of 17 primary invariants were found by Teranishi for $C_{42}$. Using the fact that $\GL_2$ acts via the standard representation on the two-dimensional space spanned by two generic matrices, Drensky and Sadikova~\cite{DS} studied the induced $\GL_2$-action on $C_{42}$. They demonstrated the existence of a minimal set of generators that spans a semisimple graded $\GL_2$-submodule of $C_{42}$ and list the generators of each simple component.  We provide the set of generators arranged by degrees in Appendix \ref{generators_appendix}. The developed techniques to find the defining relations appear to be quite computationally complicated yielding only in the discovery of the low degree relations~\cite{DLaS}. Hoge~\cite{Hoge} adopted these methods to give a presentation of $C_{33}$.

	\subsection{Hilbert Series and Hironaka Decomposition}
	One of the useful tools in this study appears to be the usage of the Hilbert series---the formal power series~$\sum_{i,j\geq 0} \dim V_{i,j} \, t^i s^j$, where~$C_{n,d}=\oplus_{i,j\geq 0} V_{i,j}$ is considered as a bigraded vector space. The bigraded Hilbert series of $C_{n2}$ are known to be symmetric rational functions in two variables and for $n=4$ they were computed by Teranishi~\cite{T} and independently by Berele and Stembridge~\cite{BStem}. For our purposes, it suffices to consider the simply graded Hilbert series of $C_{42}$, that is obtained by equalizing the variables, 
	which in its irreducible form is mentioned in \cite[Table 3]{Do}:
	\begin{equation}\label{hilbert_C42}
		\dfrac{\left(1-t^2+t^4\right) \left(1-t-t^3+ t^4+2 t^5+ t^6-t^7-t^9+t^{10}\right)}{(1-t)^3 \left(1-t^2\right)^4 \left(1-t^3\right)^5 \left(1-t^4\right)^5}.
	\end{equation}
	
	Following \cite{Do}, let us rescale~\eqref{hilbert_C42} by $ \left( 1+{t}^{2} \right) \left( 1+t+{t}^{2} \right) \left( 1+{t}^{3} \right) ^{2} $ to obtain a fraction with the following new numerator
	\begin{eqnarray}\label{numerator}
		1+2\,{t}^{5}+2\,{t}^{6}+2\,{t}^{7}+4\,{t}^{8}+4\,{t}^{9}
		+4\,{t}^{10}+4\,{t}^{11}+2\,{t}^{12}+4\,{t}^{13} \\\nonumber
		+4\,{t}^{14}+4\,{t}^{15}+4\,{t}^{16}
		+2\,{t}^{17}+2\,{t}^{18}+2\,{t}^{19}+{t}^{24}, 
	\end{eqnarray}
	and denominator $\left( 1-t \right) ^{2} \left( 1-{t}^{2} \right) ^{3}
	\left( 1-{t}^{3} \right) ^{4} \left( 1-{t}^{4} \right) ^{6}
	\left( 1-{t}^{6} \right) ^{2}$. 
	This hints at a Hironaka decomposition for $C_{42}$ with 17 primary and 48 secondary invariants of their corresponding degrees. A set of 17 primary invariants was presented by Teranishi~\cite{T} using similar methods he employed for $C_{32}$. Following the beneficial practice of the usage of traceless versions, we choose the primary invariants to be the traces of the following:
	\begin{eqnarray}\label{primary_invariants}
		\nonumber& X, Y \\ 
		\nonumber& A^2, AB, B^2 \\
		& A^3, A^2B, AB^2, B^3\\
		\nonumber& A^4, A^3B, A^2B^2, AB^3, B^4\\
		\nonumber& \frac12[A,B]^2, [A,B]^2A^2, [A,B]^2B^2.
	\end{eqnarray}
	Only some bounds on degrees of the secondary invariants were suggested in~\cite{DLaS} as a consequence of obtaining low degree defining relations. In Subsection~\ref{Final} we explain how we obtain the set of secondary invariants arranged by their degrees in Appendix \ref{Secondary_Invariants}, matching the numerator~\eqref{numerator}.

	\subsection{Invariant Commuting Variety and Calogero-Moser spaces} 
	Let us consider the following subspace of $\M_n\times \M_n$:
	\begin{equation*}
		\{ (X,Y)\in \M_n\times \M_n 
		\mid \mathrm{rank}([X,Y] +I_n)=1\}\, .
	\end{equation*}
	The GIT quotient of this space with respect to the $\GL_n$-action is called the $n$-th \textit{Calogero-Moser space}, and it is denoted by $\mathcal{C}_n$. 
	The Calogero-Moser spaces were studied in detail by Wilson ~\cite{W} and he proved that $\mathcal{C}_n$ is a smooth affine irreducible complex symplectic variety of dimension $2n$. These spaces appear to be of substantial importance in many areas, such as algebraic geometry (Hilbert schemes), deformation theory (symplectic reflection algebras), representation theory (double affine Hecke algebras) and Poisson geometry. Consequently, there is a strong motivation to obtain a clear and explicit description of their coordinate rings. Since the coordinate ring of $\mathcal{C}_n$ is a quotient algebra of $C_{n2}$, it is of great interest to see the refinement of the fundamental theorems on these spaces. 
	
	The commuting variety $\{ (X,Y)\in \M_n\times \M_n	\mid [X,Y]=0\}$ while being interesting on its own, its GIT quotient called the $n$-th \textit{invariant commuting variety}, which we denote by ${\mathcal Com}_n$. It is known that $\CC[{\mathcal Com}_n]$ is isomorphic to $(\mathbb {C}[x,y]^{\otimes n} )^{S_{n}}$ (see~\cite{Dom} and~\cite{Va}). Calogero-Moser spaces and invariant commuting varieties appear to share many similarities. It is known that 
	\begin{eqnarray*}
		\mathbb{C}[\mathcal{C}_2] &\cong& \mathbb{C}[a_1,a_2,a_3,a_4,a_5]/(a_4^2-a_3a_5-1),\\ [0.2cm]
		\mathbb{C}[{\mathcal Com}_2] &\cong& \mathbb{C}[a_1,a_2,a_3,a_4,a_5]/(a_4^2-a_3a_5) 
	\end{eqnarray*}
	and for $n=3$ the full description is obtained in \cite{NT-TG}:
	\[
	\CC[\mathcal{C}_3] \cong \CC[a_1, \dots ,a_9]/ I_1 \ , \qquad \mathbb{C}[{\mathcal Com}_3] \cong \CC[a_1, \dots ,a_9]/ I_0,
	\]
	where $I_{\delta}$ is generated by the following five relations:
	\begin{align*}
		r_{1}&=a_3a_9-2a_4a_8+a_5a_7,\\
		r_{2}&=a_5a_6-2a_4a_7+a_3a_8,\\
		r_{3}&=9\delta a_3-a_3a_4^2+a_3^2a_5+6a_6a_8-6a_7^2,\\
		r_{4}&=9\delta a_4-a_4^3+a_3a_4a_5+3a_6a_9-3a_7a_8,\\
		r_{5}&=9\delta a_5-a_4^2a_5+a_3a_5^2+6a_7a_9-6a_8^2.
	\end{align*}
	We refer to Appendix \ref{generators_appendix} for the meaning of each of these generators.
	
	Strikingly, the complicated defining relation of $C_{32}$ with the help of the defining relations of $\CC[\mathcal{C}_3]$ can be rewritten easily (see~\cite{NT-TG}):
	\[a_{21}^{2}+\frac{4a_{15}^{3}}{27}-\frac1{27}(r_{3}r_{5}-r_{4}^{2})-\frac1{18} (a_{3}{r_{1}^{2}}-2a_{4}r_{1}r_{2}+a_{5}{r_{2}^{2}})=0.
	\]
	For $n=4$ the generators of $\CC[\mathcal{C}_4]$ and $\mathbb{C}[{\mathcal Com}_4]$ are exactly $a_1,\dots, a_{14}$, since the other generators of $C_{42}$ contain commutators inside their trace function. The ideal of the defining relations for the coordinate ring of the fourth Calogero-Moser space appears to be generated by 12 relations, while for the invariant commuting variety there are 15 relations \cite{EGNT}. We will see as an application of knowing the full description of $C_{42}$, how one can deduce the presentation of $\CC[\mathcal{C}_4]$ and $\mathbb{C}[{\mathcal Com}_4]$ in Section~\ref{applications}.

	\subsection{\texorpdfstring{Poisson algebra structure on $C_{n2}$}{Poisson algebra structure on Cn2}}
	Let us consider the symplectic form 
	\[
	\omega ((X_1,X_2) \, , \, (Y_1,Y_2))=\textrm{Tr}(X_1Y_2-X_2Y_1).
	\]
	on the pairs of matrices $\mathcal{M}_n\times \mathcal{M}_n\cong T^*(\mathcal{M}_n)$. This form endows $\mathbb{C}[\mathcal{M}_n\times \mathcal{M}_n]$ with a Poisson algebra structure by
	\[
	\{f,g\} = \omega (X_f,X_g), \quad f,g\in \mathbb{C}[\mathcal{M}_n\times \mathcal{M}_n].
	\]
	Since the symplectic form is $\GL_n$-invariant, this induces the algebra $C_{n2}$ with a Poisson algebra structure. Nevertheless, a more robust approach is needed to ascertain the value of the Poisson bracket on specific generators. For that, let us consider the two-dimensional vector space $V=\mathbb{C} x\oplus\mathbb{C} y$ and define the following symplectic form on it 	
	\[
	\omega(x,y)=-\omega(y,x)=1, \qquad \omega(x,x)=\omega(y,y)=0.
	\]
	There is a Leibniz algebra structure~\cite{Ko} on the free noncommutative $\mathbb{C}$-algebra $R$ generated by two elements $x$ and $y$, defined as follows:
	\begin{equation}
		\{u_1\cdots u_p, v_1\cdots v_q\}=\sum\limits_{\substack {
				1\leq i \leq p \\ 	  1\leq j \leq q	}}\omega(u_i,v_j)u_{i+1}\cdots u_pu_1\cdots u_{i-1}v_{j+1}\cdots v_q v_1\cdots v_{j-1},
	\end{equation}
	where the elements $u_1,\dots,u_p,v_1,\dots, v_q$ are either $x$ or $y$. For the definition and further details regarding Leibniz algebras see~\cite{Lo}. Now, we define the subspace~$\textrm{Com}(R)=\textrm{Span}\{ab-ba \mid a,b\in R\}$ and note that it belongs to the center of $R$. Hence, $\textrm{Com}(R)$ is a central ideal of $R$ and one can consider the quotient Leibniz algebra $\N=R/\text{Com}(R)$. Furthermore, this quotient appears to be a Lie algebra~\cite{G}. This Lie algebra is a particular example of a Necklace Lie algebra, and for a comprehensive combinatorial definition employing quivers, we refer the reader to~\cite{BLB}. It holds a central position at the core of our work, as it facilitates the computational aspect through the application of the Lie algebra structure to the invariants of pairs of matrices. The trace map defined as follows (see~\cite{G})
	\begin{equation}\nonumber
		\begin{split}
			tr\colon \N&\rightarrow C_{n2}\\
			x^{k_1}y^{l_1}\cdots x^{k_m}y^{l_m}&\mapsto \big( (X,Y)\mapsto \mathrm{Tr}(X^{k_1}Y^{l_1}\cdots X^{k_m}Y^{l_m})\big)
		\end{split}
	\end{equation}
	is a well-defined Lie algebra homomorphism, which equips us with tools to compute the brackets on $C_{n2}$, turning the later to a Poisson algebra. Using this approach, an alternative way to establish the defining relation of $C_{n2}$ for $n=3$ was offered in~\cite{GNT}.

	\subsection{Necklaces} A necklace in $\N$ is formed by equivalence classes of cyclic permutations of words in a~$2$-letter alphabet and, as a standard consideration, we shall use as representative the biggest one with respect to the alphabetical order.
	The usual degree lexicographical order in the space of two letter words induces a \emph{degree lexicographical order} in the space of necklaces by comparing the representative word of their respective equivalence class.
	
	\begin{definition}
		A necklace is said to be $CH_n$ if at least one of the words in its equivalence class contains the same subword $n$ times consecutively. 
	\end{definition}
	
	\begin{definition}
		For a fixed bidegree~$(r, s)$, we say that a pair of necklaces $(w_1, w_2)$ is a \emph{breaking pair} if it satisfies the following properties:
		\begin{itemize}
			\item The sum of their bidegrees is exactly~$(r + 1, s + 1)$,
			\item The first one has at least degree~$2$ and it is smaller with respect the degree lexicographical order,
			\item If the degree of the first necklace is~$2$, then the second necklace is $CH_n$,
			\item The trace of at least one of them is not a term in the linear expansion of the minimal set of generators.
		\end{itemize}	
	\end{definition}
	These pairs will be actively used to compute the value of the algebraic expressions of the traces of necklaces that are not $CH_4$. 
	For instance, for $n=4$ in bidegree~$(3, 2)$ the pair $(B^2, A^4B)$ is a breaking pair, but the pair $(A^2B, ABAB)$ is not, since~$\Tr(A^2B)$ is exactly $a_7$ and $\Tr(ABAB)$ appears in the linear expansion of~$a_{15}$.
	
	\section{Description of the algorithmic approach}

	\subsection{Overview}\label{ss:overview}
	Let us sketch the general idea of our procedure.
	First, we shall inductively obtain the unique description of the trace values of all necklaces of degree less or equal than~$11$ in terms of the generators from Appendix ~\ref{generators_appendix}. We shall do this using the Cayley-Hamilton theorem for $CH_4$ when it is possible, and breaking pairs when it is not. With this information we shall obtain the complete description of the Poisson adjoint maps $\{a_5, - \}$ and $\{a_6, -\}$ on the generators $a_3,\dots,a_{32}$. 
	
	Then, starting in degree~$12$, the approach above will give us the description of each trace of necklace modulo relations. Whenever a new relation emerges, it is incorporated to the ideal of relations together with its Poisson bracket with~$a_5$ and~$a_6$. Using the already known Hilbert series, we can stop the computation when the full dimension is reached and shift to the next step.
	
	In this way, we can obtain almost the whole ideal of relations. The last piece of the puzzle will be unraveled by a careful exploration of the central elements of the Necklace Lie algebra.
	
	\subsection{Initial considerations}\label{subs:order}
	
	The ambivalence between $A$ and $B$ will allow us to work only in bidegrees $(r, s)$ with $r \geq s$. This inner symmetry will appear in all aspects of our computations, including the expressions of traces of necklaces, relations and Poisson brackets. 
	
	While finding the value of the bracket of two necklaces in terms of generic matrices is very fast, when they are replaced by their traceless versions this computation can be far more costly by the big amount of substitutions that have to be made. Because of this, it is recommended that anytime a bracket is computed its value should be stored in the memory, since it will probably be used again in a higher degree computation.

	\subsection{First non-trivial degree}
	
	To demonstrate our approach, we shall commence in degree~$n=5$, since up to degree $4$ all the traces are simply linear combinations of the generators.
	
	We start in bidegree~$(5, 0)$, where we only find one expression,~$\Tr(A^5)$, which can be decomposed using the Cayley-Hamilton theorem and it is equal to~$\frac{5}{6}a_{3}a_{6}$. Similarly, in bidegree~$(4, 1)$ we get
	\begin{align*}
		\Tr(A^4B) &= \dfrac{1}{2} \Tr(A^2) \Tr(A^2B) + \dfrac{1}{3} \Tr(A^3) \Tr(AB) \\
		&= \dfrac{1}{2}a_3 a_7 + \dfrac{1}{3}a_4 a_6.
	\end{align*}
	
	In bidegree~$(3, 2)$ things get more interesting. There are two traces, $\Tr(A^3B^2)$ and~$\Tr(A^2BAB)$ that are not $CH_4$. In this bidegree, there is only one breaking pair:~$(B^2, A^4B)$. On one hand, the Poisson bracket of the traces of this pair is:
	\[
	\{\Tr(B^2), \Tr(A^4B)\} = -4 \Tr(A^3B^2) -4 \Tr(A^2BAB). 
	\]
	On the other hand,
	\begin{align*}
		\{\Tr(B^2), \Tr(A^4B)\} &= \{ \Tr(B^2), \dfrac{1}{2} \Tr(A^2) \Tr(A^2B) + \dfrac{1}{3} \Tr(A^3) \Tr(AB) \} \\
		&= 4 \Tr(AB) \Tr(AB^2) + 2 \Tr(A^2B) \Tr(B^2) + \dfrac{2}{3} \Tr(A^2) \Tr(B^3).
	\end{align*}
	
	Equalizing both equations, together with the actual definition of the generator
	\[
	a_{16} = \Tr([A, B]^2A) = -\Tr(A^3B^2) + \Tr(A^2BAB),
	\]
	we obtain that:
	\begin{align*}
		\Tr(A^3B^2) &= \frac{1}{12} (a_5 a_6+6 a_4 a_7+3 a_3 a_8-6 a_{16}), \\
		\Tr(A^2BAB) &=\frac{1}{12} (a_5 a_6+6 a_4 a_7+3 a_3 a_8+6 a_{16}).
	\end{align*}
	
	This completes the case $n=5$, since the remaining expressions can be immediately found by making use of the inner symmetry.
	
	\subsection{Degrees less than 12}\label{subs:lower}
	
	It is known that relations appear starting from degree~$12$, so $\Tr(w)$ has a unique expression in terms of the generators for each $w$ of degree less or equal than $11$. Our task will be to efficiently compute these expressions.
	Let us proceed inductively by degree, denoted by $n$.
	An immediate application of the Cayley-Hamilton theorem to the traces of the necklaces that are~$CH_4$ reduces them to an algebraic expression involving only traces of lower degree necklaces. Therefore, it will give us their unique description in terms of the generators.
	
	Then, to find the expressions of the necklaces that are not~$CH_4$, we shall proceed by bidegree~$(r, s)$ starting with the highest~$r$ possible such that~$r + s = n$ where a non-$CH_4$ necklace appears.
	
	Let us consider all possible breaking pairs of bidegree $(r, s)$. For each such pair~$(w_1, w_2)$, we directly compute the bracket $\{ \Tr(w_1), \Tr(w_2) \}$. This results in a linear combination of traces of necklaces of bidegree $(r, s)$, in which the traces of~$CH_4$ necklaces are further substituted by their corresponding algebraic expression on the generators.
	
	The properties of breaking pairs guarantee us that at least one of the $\Tr(w_i)$ can be substituted by an algebraic expression of lower terms, thus, by the generators. Then, by making these substitution before computing the bracket $\{ \Tr(w_1), \Tr(w_2) \}$, we obtain another expression just in lower degree terms.
	
	Equalizing both expressions, we obtain a linear equation where the variables are traces of non-$CH_4$ necklaces. Doing this for all breaking pairs results in a linear system of equations. Furthermore, if there is a generator $a_i$ in this bidegree, we add the equation formed by its expansion as a linear combination of traces of necklaces.
	The solution of this completely determined system will give us the unique algebraic expression of all traces of non-$CH_4$ necklaces in terms of the generators. Then, we continue to~$(r-1, s+1)$ recursively.
	
	Note that to efficiently compute these expressions it is not necessary to calculate the linear equations for all breaking pairs. We can randomly choose one pair at a time and add the resulting corresponding equation one by one, until we reach a completely determined system.

	\subsection{Higher degrees}
	
	Starting from degree $12$ we know that relations will start to occur. Consequently, the expressions of traces of necklaces will stop being unique, since they are modded by this ideal of relations. Whenever we have to save these values, it does not matters which representative we choose. 
	We will inductively construct the ideal $I$ by gathering the relations that will appear in each step.
	
	The same process described above will now lead to an inconsistent system of equations. It is an easy linear algebra exercise to find the minimal set of polynomials on the generators that will be incorporated to $I$ to make the system consistent. There is a special case to be considered and it is bidegree $(n/2, n/2)$ for even $n$. In this case, all the breaking pairs are not enough to generate all the relations, whilst exactly one might be missing. The way to compute this extra needed equation comes from a careful exploration of the center of the Necklace Lie algebra. Let us consider the expression~$\Tr([A, B]^{n/2})$. On one hand, we can expand it and substitute the traces of the $CH_4$ necklaces. On the other hand, we can use the Cayley-Hamilton theorem to break $[A, B]^{n/2}$ into smaller degree terms, whose expressions on generators are already known. The equalization of both expansions will be incorporated to the system of equations.
	
	After obtaining all the relations in degree~$n$, we compute the Hilbert series of $I$ and compare it with~\eqref{hilbert_C42}. If they do not coincide, we move to the next degree.

	\subsection{Refined execution}\label{ss:refined}
	
	Although the algorithmic approach described above is valid, evidently an optimization is necessary. Calculating explicit Hilbert series or computing Poisson brackets at higher degrees can be very demanding. For this reason, we found some shortcuts using the Poisson structure and general theory of Gröbner bases. 
	
	We still need to compute the expressions of the traces of necklaces, but we found experimentally that this will only be needed until degree~$16$. Let us explain how to further optimize this process.
	
	Anytime that we update the ideal, we compute a reduced graded Gröbner basis of it, so the ideal membership problem or the computation of Hilbert series is fast.
	
	Therefore, after degree~$12$, at each bidegree $(r, s)$ we know how many dimensions we are missing by an examination of the Hilbert series of our current ideal $I$. If this number is greater than zero, we try to find new independent elements coming from the Poisson bracket of~$a_5$ with the relations of bidegree~$(r + 1, s - 1)$. If they are not enough, we try to find new polynomials coming from the Poisson bracket of~$a_6$ with the relations of bidegree~$(r - 2, s + 1)$. 
	
	Now, to compute the expressions of the traces of non-$CH_4$ necklaces we can proceed in the following way. We select randomly one breaking pair at a time and we consider the resulting equation from the procedure described before. Depending on its situation with the previously obtained equations we do the following
	\begin{itemize}
		\item If it is linearly dependent, we discard it.
		\item If it is linearly independent and does not make the system incompatible, we add it.
		\item If it makes the system incompatible, we compute the relation in terms of the generators, we incorporate it to $I$ if it does not belong to it yet, and then we discard it.
	\end{itemize}
	When the system of equations reaches full rank and $I$ has full dimension, we stop the procedure. Note that if we are in bidegree $(n/2, n/2)$ for even $n$, the first equation to consider should be the special one coming from $\Tr([A, B]^{n/2})$.
	
	When we have all the relations up to degree~$16$, we found that we do not need to compute the expressions of the traces of necklaces any more. The consecutive application of the Poisson bracket of $a_5$ or $a_6$ to the already obtained relations fills up the dimensions of our graded ideal. This releases us from making a huge amount of computations until~$20$, degree where the last new relation appears. 
	
	\subsection{Outcome}
	
	After applying the procedure explained above, we obtained the following data:
	
	The polynomial ideal $I$ is generated by the 105 relations listed in the ancillary file. The maximum degree found in this set of generators is~$(10, 10)$.
	Furthermore, only 8 relations (also listed in the ancillary file) are needed to generate~$I$ under the repeated adjoint maps~$\{ a_5, - \}$ and~$\{ a_6, - \}$. More precisely, three of them appear in bidegrees: 
	\[
	(6, 6), \quad (7, 7), \quad (8, 8),
	\]
	directly from the expansion of the respective power of the commutator. The other five are in bidegrees:
	\[
	(7, 5), \quad (6, 6), \quad (7, 6), \quad (8, 6), \quad (8, 7) .
	\]
	
	Therefore, we can state the main theorem:
	\begin{theorem}\label{th:main}
		There is an isomorphism
		\[
		\CCC \cong \CC[a_1, a_2] \otimes \dfrac{\CC[a_3, \dots, a_{32}]}{I}
		\]
	\end{theorem}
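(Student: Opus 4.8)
The plan is to establish the claimed isomorphism by combining the classical Procesi--Razmyslov theorems with the explicit computational data summarized in the Outcome subsection. Recall that by Procesi's theorem the algebra $C_{42}$ is generated by traces of products of the two generic matrices, and by Razmyslov's second fundamental theorem all relations among these generators are consequences of the Cayley--Hamilton theorem. Passing to the traceless generators $A$ and $B$, Procesi's splitting $C_{42}\cong\CC[u_1,u_2]\otimes C_{4,2}(0)$ reduces the statement to showing $C_{4,2}(0)\cong\CC[a_3,\dots,a_{32}]/I$, where $a_1=\Tr(X)$, $a_2=\Tr(Y)$ play the role of the free polynomial variables $u_1,u_2$. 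By the result of Drensky and Sadikova the $30$ elements $a_3,\dots,a_{32}$ form a minimal generating set of $C_{4,2}(0)$, so there is a canonical surjective algebra homomorphism $\pi\colon\CC[a_3,\dots,a_{32}]\twoheadrightarrow C_{4,2}(0)$, and it remains to prove that $\ker\pi=I$.

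First I would verify the inclusion $I\subseteq\ker\pi$: every one of the $105$ listed generators of $I$ is, by construction, an algebraic relation obtained by equating two valid expressions for the trace of some necklace (one coming from a breaking pair via the Kontsevich/Necklace Lie bracket and the trace map $tr$, the other from Cayley--Hamilton), together with the relations coming from $\Tr([A,B]^{n/2})$; since $tr$ is a well-defined Lie algebra homomorphism and both sides evaluate the same polynomial function on $\M_4\times\M_4$, each generator of $I$ maps to $0$ under $\pi$. Hence $\pi$ descends to a surjection $\bar\pi\colon\CC[a_3,\dots,a_{32}]/I\twoheadrightarrow C_{4,2}(0)$.

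Next, to get injectivity of $\bar\pi$, I would compare Hilbert series. The graded Hilbert series of $C_{42}$ is the known rational function~\eqref{hilbert_C42}, and dividing by $1/(1-t)^2$ (for the two free generators $a_1,a_2$) gives the Hilbert series of $C_{4,2}(0)$. On the other side, once a reduced graded Gröbner basis of $I$ is available, the Hilbert series of $\CC[a_3,\dots,a_{32}]/I$ is computed combinatorially from the leading term ideal. The core of the argument is that these two series coincide: this is exactly the stopping criterion enforced in the algorithm (the procedure terminates only once the Hilbert series of the current ideal matches~\eqref{hilbert_C42}), and it is what certifies that no relation has been missed. Since $\bar\pi$ is a surjective homomorphism of graded vector spaces whose source and target have equal (finite-dimensional in each degree) Hilbert series, $\bar\pi$ is an isomorphism. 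Finally, one re-tensors with $\CC[a_1,a_2]$ using Procesi's decomposition to recover $\CCC\cong\CC[a_1,a_2]\otimes\bigl(\CC[a_3,\dots,a_{32}]/I\bigr)$.

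The main obstacle is the Hilbert series equality, and more precisely the claim that the $105$ explicit polynomials \emph{generate} all of $\ker\pi$ rather than merely a subideal. Razmyslov's theorem guarantees $\ker\pi$ is generated by Cayley--Hamilton consequences, but it does not by itself bound the degrees in which new generators of $\ker\pi$ occur; a priori one would need to check relations in arbitrarily high degree. The paper's resolution---and the step I would flag as doing the real work---is the Poisson-theoretic argument of Subsection~\ref{ss:refined}: the $8$ seed relations in bidegrees $(6,6),(7,7),(8,8),(7,5),(6,6),(7,6),(8,6),(8,7)$ generate $I$ under the repeated adjoint operators $\{a_5,-\}$ and $\{a_6,-\}$, and because these adjoint maps are derivations of the Poisson algebra that preserve the ideal structure, applying them to genuine relations produces genuine relations and fills in all higher degrees (up to $20$, where the last relation appears) without further Cayley--Hamilton input. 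Verifying that the resulting ideal already attains the full Hilbert series~\eqref{hilbert_C42} in every degree is the decisive finite computation underlying the theorem, and I would present it as the completed output of the algorithm together with the Gröbner basis certificate stored in the ancillary file.
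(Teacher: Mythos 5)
Your proposal is correct and follows essentially the same route as the paper: the $105$ generators of $I$ are checked to be genuine relations, surjectivity comes from the Drensky--Sadikova generating set (with the $\CC[a_1,a_2]$ factor split off à la Procesi), and the graded dimension count via the equality of the Hilbert series of the quotient with~\eqref{hilbert_C42} forces the surjection to be an isomorphism. Your additional remarks on the Poisson adjoint maps $\{a_5,-\}$, $\{a_6,-\}$ concern how $I$ was found, which---as the paper itself notes---is not needed for the verification of the theorem.
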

	\begin{proof}
		With the help of any computer algebra system it is an easy computation to check that the 105 polynomials generating~$I$ are relations in $\CCC$. Then, checking that the Hilbert series of $I$ coincides with~\eqref{hilbert_C42} proves the theorem.
	\end{proof}

	\subsection{Final considerations}
	
	The method discussed above is a detailed explanation on how we obtained the full set of relations generating the ideal. Since the validity of the main theorem can be checked without examining following the detailed procedure~\ref{ss:overview}--\ref{ss:refined}, we consider that it is cleaner and clearer to present it in this way.
	
	Since we are working over a polynomial ring of~$30$ variables with an ideal generated by~$105$ polynomials---some of them huge---the computation of Gröbner bases can be a really demanding task even for modern computers. The only way we were able to calculate the Hilbert series to conclude the proof of Theorem~\ref{th:main} was using the Hilbert driven Gröbner basis algorithm implemented in \texttt{Macaulay2}~\cite{Mac2}, with the grading induced by the sum of the lengths. The only monomial order we could find for this to be executed in a reasonable amount of time is the following:
	Let~$a_i, a_j$ be of bidegree $(r_i, s_i)$ and $(r_j, s_j)$, respectively. Then $a_i < a_j$ whenever $(r_i, s_i)$ is lower than $(r_j, s_j)$ with respect to the lexicographical order. There are exactly three cases when we have two generators of the same bidegree, choosing~$a_{15} < a_{12}$, $a_{21} < a_{19}$ and $a_{27} < a_{25}$. 
	
	The main reason for the high efficiency of this method is the ability to transition from lower to higher degree relations via the Poisson structure. Previous approaches were limited to generating relations within the same degree, and having to again fully decompose the problem when moving to a higher total grading. Since the highest degree required is 20, the decomposition should be carried out up to that level. However, with our method, the computation can stop at degree 16, the highest among our 8 generating relations. In fact, not even the entire degree 16 needs to be computed, as only one of the extra relations is missing, therefore, a partial analysis of bidegree (8, 8) suffices.
	
	\subsection{\texorpdfstring{Hironaka decomposition for $C_{42}$}{Hironaka decomposition for C42}}\label{Final} 
	After getting the full ideal of relations, to get the Hironaka decomposition of $C_{42}$ becomes a much easier task. After an examination of the modified denominator~\eqref{numerator} of the Hilbert series, we choose as primary invariants the~$17$ traces~\eqref{primary_invariants} denoted by $a_1,\dots, a_{15},a_{18},a_{20}$ in Appendix~\ref{generators_appendix}. Then, we compute a Gröbner basis with respect to the graded reverse lexicographical order of the ideal generated by $a_1,\dots, a_{15},a_{18},a_{20}$ together with the~$105$ relations obtained in Theorem~\ref{th:main}. Finally, it is a matter of checking which are the 48 normal monomials in this Gröbner basis.
	\begin{corollary}
		The algebra $C_{42}$ is a free $\mathbb{C}[a_1,\dots, a_{15},a_{18},a_{20}]$-module of rank~$48$, and a basis is listed in Appendix~\ref{Secondary_Invariants}.
	\end{corollary}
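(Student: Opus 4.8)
The plan is to deduce the Hironaka decomposition from the explicit presentation of Theorem~\ref{th:main} together with the classical fact, recorded in the introduction, that $C_{42}$ is Cohen--Macaulay (see~\cite{HR} and~\cite{VDB}). Recall the general principle underlying~\eqref{Hironaka}: if $S$ is a finitely generated graded Cohen--Macaulay $\CC$-algebra of Krull dimension $k$ and $p_1,\dots,p_k$ is a homogeneous system of parameters, then any homogeneous lifts $q_1,\dots,q_s$ of a $\CC$-basis of the Artinian quotient $S/(p_1,\dots,p_k)$ form a \emph{free} basis of $S$ as a $\CC[p_1,\dots,p_k]$-module. For $S=C_{42}$ one has $k=(d-1)n^2+1=17$ (see~\cite{T}), and we take as candidate primary invariants the $17$ traces $a_1,\dots,a_{15},a_{18},a_{20}$ of~\eqref{primary_invariants}. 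Thus the two points to settle are: (a) these $17$ elements form a homogeneous system of parameters, and (b) the resulting free module has rank $48$ with the basis exhibited in Appendix~\ref{Secondary_Invariants}.

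Both points will come out of a single Gröbner basis computation. Using the isomorphism of Theorem~\ref{th:main}, and that $a_1,a_2$ are precisely the polynomial generators of the tensor factor $\CC[a_1,a_2]$, we obtain
\[
C_{42}/(a_1,\dots,a_{15},a_{18},a_{20}) \cong \CC[a_3,\dots,a_{32}]/(I + (a_3,\dots,a_{15},a_{18},a_{20})).
\]
I would compute a reduced graded Gröbner basis of the right-hand ideal with respect to the graded reverse lexicographic order, feeding in the $105$ relations generating $I$ (for which a reduced graded Gröbner basis is already in hand from the proof of Theorem~\ref{th:main}) together with the $15$ linear generators $a_3,\dots,a_{15},a_{18},a_{20}$. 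Termination with finitely many standard monomials is exactly the statement that the quotient is Artinian, i.e.\ that $a_1,\dots,a_{15},a_{18},a_{20}$ is a homogeneous system of parameters, which settles (a).

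For (b), the standard monomials of that Gröbner basis form a $\CC$-basis of the Artinian quotient; since $C_{42}$ is Cohen--Macaulay, their homogeneous lifts form a free $\CC[a_1,\dots,a_{15},a_{18},a_{20}]$-basis of $C_{42}$, which is the list in Appendix~\ref{Secondary_Invariants}. The number of such monomials must equal
\[
\Bigl[H_{C_{42}}(t)\,(1-t)^2(1-t^2)^3(1-t^3)^4(1-t^4)^6(1-t^6)^2\Bigr]_{t=1},
\]
which by the rescaling of the Hilbert series carried out after~\eqref{hilbert_C42} is the sum of the coefficients of the numerator~\eqref{numerator}, namely $48$; moreover the degrees of the $48$ secondary invariants must agree with the exponents occurring in~\eqref{numerator}. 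This provides a complete internal consistency check on the computation.

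The one genuinely delicate aspect is the Gröbner basis calculation itself: in the polynomial ring on the $30$ variables $a_3,\dots,a_{32}$, with an ideal given by $105$ often very bulky relations, it sits at the boundary of feasibility. Just as in the proof of Theorem~\ref{th:main}, the practical resolution is to work with a carefully chosen monomial order and to exploit the already-known Hilbert series, running the Hilbert-driven Gröbner basis algorithm in \texttt{Macaulay2}~\cite{Mac2}; with those ingredients the computation becomes, as asserted in Subsection~\ref{Final}, a routine matter of reading off the $48$ normal monomials.
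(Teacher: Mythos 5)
Your proposal is correct and follows essentially the same route as the paper: choosing $a_1,\dots,a_{15},a_{18},a_{20}$ as primary invariants, computing a graded reverse lexicographic Gr\"obner basis of the ideal generated by them together with the $105$ relations of Theorem~\ref{th:main}, and reading off the $48$ normal monomials as the secondary invariants, with the Cohen--Macaulay property supplying freeness. The only difference is that you spell out explicitly the homogeneous-system-of-parameters verification and the Hilbert series count, which the paper leaves implicit.
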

	
	\section{Applications to the invariant commuting variety and the Calogero-Moser space}\label{applications}

	Since the invariant commuting variety is a subvariety of the variety of pairs of matrices with respect to conjugation, there is a surjective algebra homomorphism from $C_{n2}$ to $\mathbb{C}[\Com_n]$, for arbitrary $n>1$.
	Therefore,  we can attempt to leverage the structure of $C_{n2}$ to obtain the complete description of $\CC[\Com_n]$. 
	
	If $C_{n2}$ and $\CC[\Com_n]$ are isomorphic to the polynomial ideals $\CC[a_1, \dots, a_k]/I$ and~$\CC[a_1, \dots, a_s]/J$, respectively, where the $a_i$ represent the trace of some expression, there is a well-defined surjection $\CC[a_1, \dots, a_k] \to \CC[a_1, \dots, a_s]$ sending any generator to its corresponding expression in the commuting variety. In other words, we obtain the following diagram:
	\[
	\xymatrix{
		0 \ar[d] & &  0 \ar[d] \\
		I \ar[d] \ar[rr] & & J \ar[d] \\
		\CC[a_1, \dots, a_k] \ar[d] \ar@{>>}[rr] & & \CC[a_1, \dots, a_s] \ar[d] \\
		\dfrac{\CC[a_1, \dots, a_k]}{I} \ar[d] \ar@{>>}[rr] & & \dfrac{\CC[a_1, \dots, a_s]}{J} \ar[d] \\
		0  & & 0 
	}
	\]
	
	Note that all arrows are algebra homomorphisms, and furthermore, the lower horizontal arrow is also a Poisson algebra homomorphism.
	
	If we know the description of $I$ and its induced morphism to $J$ happens to be a surjection, we will obtain the full description of $J$.
	Let us examine what happens in the known cases:

	For $n = 2$, it is established that $k = s = 5$, hence the surjection in the middle is an isomorphism. Nevertheless, $I = 0$ but $J = \langle a_4^2 - a_3a_5 \rangle$,  indicating that the induced morphism between the ideals of relations can never be a surjection.
	
	When $n = 3$, it is known that $k = 11$ and $s = 9$ and the morphism in the middle sends each generator to itself, except the last two that are mapped to zero. In this case, $I$ is the ideal generated by a single polynomial, while $J$ is an ideal generated by five polynomial relations~\cite{NT-TG}. Therefore, it is obvious that, again, the induced morphism is not a surjection.
	
	However, for $n = 4$  the number of relations increases significantly, so the situation may differ.
	It is known that $k=32$ while $s=14$  and, moreover, the polynomial ideal~$I$ is generated by $105$ equations while the ideal $J$ is generated by just $15$ (see~\cite{EGNT}). Looking at the form of the generators in Appendix~\ref{generators_appendix}, it is easy to guess the map---$a_i$'s are mapped to themselves for all $1\leq i \leq 14$ and to zero otherwise. Given that the rank of the Hironaka decomposition of $\CC[\Com_{4}]$ is already established~\cite{EG}, it is a straightforward exercise to verify that this map induces a surjection from $I$ to $J$, recovering the description of $J$ obtained in~\cite{EGNT}.

	In the case of Calogero-Moser spaces, things work in a similar fashion. We can construct a diagram as before, with the only variation being the image of the generators.
	
	For $n = 2$, the map is defined identically, but $J = \langle a_{4}^2 - a_3a_5 +1 \rangle$, so again, it is not possible to recover $J$ by this map.
	
	When $n = 3$, it is known that $s = 9$ and the map is defined as follows: the first nine generators are mapped to themselves while the last two go to the scalars $-3$ and $2$, respectively~\cite{NT-TG}. Once more, $I$ is generated by just one relation, whereas $J$ is generated by five of them. Consequently, $J$ cannot be recovered by this map.
	
	In the case of $n = 4$, the situation is analogous to that in the invariant commuting variety; however, determining the morphism is more intricate. Motivated by the identities that emerge when the pair of matrices is restricted to Calogero-Moser space $\C_4$, it can be proved by similar methods of~\cite{NT-TG} that the generators are mapped as follows:
	\begin{align*}
		&a_{i} \mapsto a_i, \quad \quad 1\leq i \leq 14,\\
		&a_{15}\mapsto 6, \quad \quad  a_{21}\mapsto 24, \quad a_{27}\mapsto 42, \quad a_{32}\mapsto 168,\\
		&a_{18}\mapsto 3a_3,  \quad a_{19}\mapsto 6a_4, \quad a_{20}\mapsto 3a_5, \\
		&a_{24}\mapsto 6a_3, \quad a_{25}\mapsto 6a_4, \quad  a_{26}\mapsto 6a_5,\\	
		&a_{28}\mapsto 6a_6, \quad a_{29}\mapsto 6a_7, \quad  a_{30}\mapsto 6a_8, \quad a_{31}\mapsto 6a_9	
	\end{align*}
	and the unmentioned generators are mapped to zero. Analogously, in this case, the induced morphism from $I$ to $J$ can be easily checked that it is a surjection, making use of the rank of the Hironaka decomposition of $\CC[\C_4]$, recovering the main result of~\cite{EGNT}.


	\appendix
	\section{Generators} \label{generators_appendix}

	\renewcommand{\arraystretch}{1.5}
	
	\begin{center}
		\begin{tabular}{cl}
			\toprule
			\multicolumn{1}{l}{Degree} & Generators                                                       \\ \midrule
			1                          & $a_1=\Tr(X), \ a_2=\Tr(Y)$                                       \\ \midrule[0.1mm]
			2                          & $a_3=\Tr(A^2),\ a_4=\Tr(AB), \ a_5=\Tr(B^2)$                     \\ \midrule[0.1mm]
			3                          & $a_6=\Tr(A^3), \ a_7=\Tr(A^2B), \ a_8=\Tr(AB^2), \ a_9=\Tr(B^3)$ \\ \midrule[0.1mm]
			\multirow{3}{*}{4}         & $a_{10}=\Tr(A^4), \ a_{11}=\Tr(A^3B), \ a_{12}=\Tr(A^2B^2),$     \\  
			& $a_{13}=\Tr(AB^3
			), \ a_{14}=\Tr(B^4)$                            \\  
			& $a_{15}=\frac{1}{2}\Tr([A,B]^2)$                                 \\ \midrule[0.1mm]
			5                          & $a_{16}=\Tr([A,B]^2A), \ a_{17}=\Tr([A,B]^2B)$                   \\ \midrule[0.1mm]
			\multirow{4}{*}{6}         & $a_{18}=\Tr([A,B]^2A^2),$                                        \\  
			& $a_{19}=\Tr([A,B]^2(AB+BA)),$                                    \\  
			& $a_{20}=\Tr([A,B]^2B^2),$                                        \\  
			& $a_{21}=\frac13\Tr([A,B]^3)$                                     \\ \midrule[0.1mm]
			7                          & $a_{22}=\Tr([A,B]^3A), \ a_{23}=\Tr([A,B]^3B)$                   \\ \midrule[0.1mm]
			\multirow{4}{*}{8}         & $a_{24}=\Tr([A,B]^3A^2),$                                        \\  
			& $a_{25}=\frac12\Tr([A,B]^3(AB+BA)),$                             \\  
			& $a_{26}=\Tr([A,B]^3B^2),$                                        \\  
			& $a_{27}=\frac12\Tr([A,B]^4)$                                     \\ \midrule[0.1mm]
			\multirow{4}{*}{9}         & $a_{28}=\Tr([A,B]^3A^3),$                                        \\  
			& $a_{29}=\frac13\Tr([A,B]^3(A^2B+ABA+BA^2)),$                     \\  
			& $a_{30}=\frac13\Tr([A,B]^3(AB^2+BAB+B^2A)),$                      \\  
			& $a_{31}=\Tr([A,B]^3B^3),$                                        \\ \midrule[0.1mm]
			10                         & $a_{32}=\Tr([A,B]^3(A^2B^2-AB^2A-BA^2B+B^2A^2))$                 \\ \bottomrule
		\end{tabular}
		
	\end{center}

	\section{Secondary invariants} \label{Secondary_Invariants}

	\begin{center}
		\begin{tabular}{cl}
			\toprule
			Degree & Secondary Generators \\ \midrule
			0      &    1 \\ \midrule[0.1mm]
			5      &   $a_{16}$, $a_{17}$  \\ \midrule[0.1mm]   
			6      &   $a_{19}$, $a_{21}$ \\ \midrule[0.1mm]
			7      &   $a_{22}$, $a_{23}$ \\ \midrule[0.1mm]
			8      &  $a_{24}$, $a_{25}$, $a_{26}$, $a_{27}$  \\ \midrule[0.1mm]
			9      &  $a_{28}$, $a_{29}$, $a_{30}$, $a_{31}$  \\ \midrule[0.1mm]
			10      &  $a_{32}$, $a_{16}^2$, $a_{16} a_{17}$, $a_{17}^2$  \\ \midrule[0.1mm]
			11      &   $a_{16} a_{19}$, $a_{16} a_{21}$, $a_{17} a_{19}$, $a_{17} a_{21}$ \\ \midrule[0.1mm]
			12      &  $a_{19} a_{21}$, $a_{21}^2$  \\ \midrule[0.1mm]
			13      &  $a_{19} a_{22}$, $a_{19} a_{23}$, $a_{21} a_{22}$, $a_{21} a_{23}$  \\ \midrule[0.1mm]
			14      &   $a_{19} a_{24}$, $a_{19} a_{26}$, $a_{21} a_{27}$, $a_{22} a_{23}$  \\ \midrule[0.1mm]
			15      &  $a_{22} a_{24}$, $a_{23} a_{24}$, $a_{23} a_{25}$, $a_{23} a_{26}$  \\ \midrule[0.1mm]
			16      &  $a_{24} a_{25}$, $a_{25} a_{26}$, $a_{25} a_{27}$, $a_{27}^2$  \\ \midrule[0.1mm]
			17      &  $a_{26} a_{28}$, $a_{26} a_{29}$  \\ \midrule[0.1mm]
			18      &  $a_{27} a_{32}$, $a_{29} a_{30}$  \\ \midrule[0.1mm]
			19      &  $a_{21}^2 a_{22}$, $a_{21}^2 a_{23}$  \\ \midrule[0.1mm]
			24      & $a_{25} a_{27}^2$                \\ \bottomrule
		\end{tabular}
		
	\end{center}

	
	\providecommand{\bysame}{\leavevmode\hbox to3em{\hrulefill}\thinspace}
	\providecommand{\MR}{\relax\ifhmode\unskip\space\fi MR }
	\providecommand{\MRhref}[2]{%
		\href{http://www.ams.org/mathscinet-getitem?mr=#1}{#2}
	}
	\providecommand{\href}[2]{#2}

\end{document}